\newtheorem{theorem}{Theorem}[section]
\newtheorem{claim}[theorem]{Claim}
\newtheorem{convention}{Convention}
\newenvironment{proof1}{proof}{\hspace{2cm}$\square$\par}
\begin{document}

\title{Stable theories and representation over sets}

\author[M. Cohen]{Moran Cohen}
\address{
Institute of Mathematics The Hebrew University of Jerusalem, Jerusalem
91904, Israel.
}
\email{moranski@math.huji.ac.il}

\author[S. Shelah]{Saharon Shelah }
\address{
Institute of Mathematics The Hebrew University of Jerusalem, Jerusalem
91904, Israel and Department of Mathematics Rutgers University New Brunswick,
NJ 08854, USA
}
\email{shelah@math.huji.ac.il}

\thanks{Publication 919 of the second author}

\begin{abstract}
In this paper we explore the representation property over sets. This property generalizes constructibility, however is weak enough to enable us to prove that the class of theories $T$ whose models are representable is exactly the class of stable theories.
Stronger results are given for $\omega$-stable.

\end{abstract}

\maketitle

\section{Preliminaries}

\begin{convention}
We use $\mathfrak{k}$ to denote an arbitrary class of structures
(of a given language, closed under isomorphism). The class of structures
of the language $\left\{ =\right\} $ is denoted $\mathfrak{k}^{{\rm eq}}$.
\begin{enumerate}
\item $\mathfrak{C}$ is a {}``monster'' model for $T$. i.e. a sufficiently
saturated one.
\item for a sequence of sets $\left\langle A_{\beta}:\beta<\alpha\right\rangle $
let $A_{<\alpha}:=\bigcup_{\beta<\alpha}A_{\beta}$ , $A_{\leq\alpha}:=A_{<\alpha+1}$.
\item  $\mathrm{tp}(a,A):=\mathrm{tp}(a,A,\mathfrak{C})$.

\end{enumerate}
\end{convention}

\definition{Let $\mathfrak{k}$ be a class of structures of a given vocabulary
$\tau$.}

\begin{itemize}
\item ${\rm Ex}_{\mu,\kappa}^{1}(\mathfrak{k})$ denotes the minimal class
of structures $\mathfrak{k}^{\prime}\supseteq\mathfrak{k}$ with the
property that for each structure $I\in\mathfrak{k}$ there exists an
enrichment $I^{+}\in\mathfrak{k}^{\prime}$ by a partition $\left\langle P_{\alpha}^{I^{+}}:\alpha<\kappa\right\rangle $,
partial unary functions $\left\langle F_{\beta}^{I^{+}}:\beta<\mu\right\rangle $
such that $F_{\beta}(P_{\alpha})\subseteq P_{<\alpha}$ and $P_{\alpha},F_{\beta}\notin\tau_{I}$
hold for every $\alpha<\kappa,\ \beta<\mu$

\item For a given model $I\in\mathfrak{k}$, 
we define the free algebra $\mathcal{M}=\mathcal{M}_{\mu,\kappa}(I)$ as the model
having the language $\tau^+:=\tau_{I}\cup\left\{ F_{\alpha,\beta},\right\}_{\alpha<\mu,\;\beta<\kappa}$
where each $F_{\alpha,\beta}$ is a $\beta$-place function.
$\left\|\mathcal{M}_{\mu,\kappa}(I)\right\|$ consists of all the terms constructed in the usual 
(well-founded inductive) way from elements of $I$ using the functions $F_{\alpha,\beta}$
The functions and relations of $\tau_I$ are interpreted in $\mathcal{M}$ as partial functions and restricted relations on $I\subseteq\mathcal{M}$.

\item Let $\theta_{\mu,\kappa}:=\left|\mathcal{M}_{\mu,\kappa}(\kappa)\right|$
(The power of the set of $\mu$-terms with $\kappa$ constants).

\item ${\rm Ex}_{\mu,\kappa}^{2}(\mathfrak{k})$ denotes the class of models 
of the form $\mathcal{M}_{\mu,\kappa}(I)$ for every $I\in\mathfrak{k}$. 

\end{itemize}

\definition{\label{def:weak_repr} Let $M\models T$, $I$ a structure. $f:M\rightarrow I$
is a ($\Gamma,\Delta$)-representation of $M$ in-$I$ iff ${\rm Rang}(f)$
is closed under functions in $I$ (both partial and full), and for
$\overline{a},\overline{b}\in\ ^{<\omega}M$ the following holds:
\[
{\rm tp}_{\Gamma}(f(\overline{a}),\emptyset,I)={\rm tp}_{\Gamma}(f(\overline{b}),\emptyset,I)\;\Rightarrow\;{\rm tp}_{\Delta}(\overline{a},\emptyset,M)={\rm tp}_{\Delta}(\overline{b},\emptyset,M)\]
}

\begin{itemize}
\item We say that $M$ is $(\mathfrak{k},\Gamma,\Delta)$-representable
if $I\in\mathfrak{k}$ and there exists a $(\Gamma,\Delta)$-representation
$f:M\to I$ .
\item we say that the theory $T$ is $(\mathfrak{k},\Gamma,\Delta)$-representable
if every $M\models T$ is $(\mathfrak{k},\Gamma,\Delta)$-representable.
\item We omit $\Delta,\Gamma$ from the notation if $\Gamma=qf_{\mathcal{L}[\tau_{I}]},\;\Delta=\mathcal{L}[\tau_{M}]$.
\end{itemize}

\observ{let $M\models T$}

\begin{itemize}
\item $M$ is $\mathfrak{k}$-representable implies $M$ is ${\rm Ex}_{\mu,\kappa}^{i}(\mathfrak{k})$-representable $(i=1,2)$.
\item For $i=1,2$: $M$ is ${\rm Ex}_{\mu_{2},\kappa_{2}}^{i}({\rm Ex}_{\mu_{1},\kappa_{1}}^{i}(\mathfrak{k}))$-representable
iff $M$ is ${\rm Ex}_{\mu_{1}+\mu_{2},\kappa_{1}+\kappa_{2}}^{i}(\mathfrak{k})$-representable.
\item $M$ is ${\rm Ex}_{\mu_{2},\kappa_{2}}^{2}({\rm Ex}_{\mu_{1},\kappa_{1}}^{1}(\mathfrak{k}))$-representable
implies $M$ is ${\rm Ex}_{\mu_{1},\kappa_{1}}^{1}({\rm Ex}_{\mu_{2},\kappa_{2}}^{2}(\mathfrak{k}))$-representable.
\end{itemize}

\fact{\label{fac:m_k_eq_repr}A map $f:M\to I^{+}$ is a representation
of $M$ in $I^{+}\in{\rm Ex}_{\mu,\kappa}^{1}(\mathfrak{k}^{{\rm eq}})$,
if $\mathrm{tp}(\bar{a},\emptyset,M)=\mathrm{tp}(\bar{b},\emptyset,M)$
holds for every $U,\tilde{h},\overline{a},\overline{b}$ fulfilling
the following condition:}

$U\subseteq\left|I^{+}\right|$ is such that $\mathrm{cl}_{\left\{ F_{\beta}^{I^{+}}\right\} }U=U$,
$\tilde{h}$ is a partial automorphism of $I^{+}$ whose domain contains
$U$, and $\bar{a},\bar{b}\in\ ^{m}M$ are sequences such that $\tilde{h}(f(\bar{a}))=f(\bar{b})$
and $f(\overline{a})\subseteq U$.

\section{Stable Theories}

\discussion{In this section we prove the equivalence stable = ${\rm Ex}_{\mu_{1},\kappa_{1}}^{1}({\rm Ex}_{\mu_{2},\kappa_{2}}^{2}(\mathfrak{k}^{{\rm eq}}))$-representable}

\theorem{\label{thm:find_indep_set}Let $T$ be ${\rm Ex}_{\mu_{1},\kappa_{1}}^{1}({\rm Ex}_{\mu_{2},\kappa_{2}}^{2}(\mathfrak{k}^{{\rm eq}}))$-representable.
If $\overline{b}=\left\langle \overline{b}_{\alpha}:\alpha<\lambda\right\rangle \subseteq\mathfrak{C}$,
is such that $\lg\overline{b}_{\alpha}<\mu=\mu_{1}+\kappa_{2},\;\lambda>\kappa_{1}+\mu_{1}+\kappa_{2}+$,
and $\lambda>\chi^{<\mu}$ for every $\chi<\lambda$ then there exists
an $S\subseteq\lambda$ of cardinality $\lambda$ such that $\left\langle \overline{b}_{\alpha}:\alpha\in S\right\rangle $
is an indiscernible set.}

\begin{proof}
Let $M\models T$ contain $\overline{{\bf a}}$, $f:M\to\mathcal{M}^{++}:=\left(\mathcal{M}_{\mu_{2},\kappa_{2}}(I),P_{\alpha},F_{\beta}\right)_{\alpha<\kappa_{1},\beta<\mu_{1}}$
a representation. Let $\overline{a}_{\alpha}=f(\overline{b}_{\alpha})$.

assume w.l.o.g:
\begin{itemize}
\item Every $\overline{a}_{\alpha}$ is closed under subterms in $\mathcal{M}_{\mu_{2},\kappa_{2}}(I)$.
\item Every $\overline{a}_{\alpha}$ is closed under the partial functions
$F_{\beta}$.
\item $\lg\overline{a}_{\alpha}=\xi$ ( for all $\alpha<\lambda$ ).
\end{itemize}
$\lambda={\rm cf}\lambda>\left(\theta_{\mu_{1},\kappa_{1}}\right)^{\xi}$
and therefore there exists $\overline{\sigma}(\overline{x}),\;\lg\overline{x}<\kappa_{2},\; S_{0}\in\left[\lambda\right]^{\lambda}$
such that for all $\alpha\in S_{0}$ there exists $\overline{t}_{\alpha}\subseteq\ ^{<\kappa_{2}}I$
such that $\overline{a}_{\alpha}=\overline{\sigma}(\overline{t}_{\alpha})$.

similarly, $\left(\kappa_{2}\right)^{\xi}<\lambda$ and there exists
an $S_{1}\in\left[S_{0}\right]^{\lambda}$ on which the map \[
\alpha\mapsto\left\{ \left(i,\beta\right)\in\xi\times\kappa_{2}:a_{\alpha}^{i}\in P_{\beta}\right\} \]
is constant and equal to a binary relation $R_{1}$.

Also, $\xi^{\mu_{1}+\xi}<\lambda$ implies that there exists an $S_{2}\in\left[S_{1}\right]^{\lambda}$
on which the map

\[
\alpha\mapsto\left\{ \left(\beta,\zeta_{0},\zeta_{1}\right):\zeta_{0},\zeta_{1}<\xi,\;\beta<\mu_{1},\; F_{\beta}(a_{\alpha}^{\zeta_{0}})=a_{\alpha}^{\zeta_{1}}\right\} \]

is constant and equal to the relation $R_{2}$.

\relax From lemma \ref{thm:delta_sys_lemma} it follows that there exist
$S_{3}\in\left[S_{2}\right]^{\lambda}$, $U\subseteq\xi,\; E\subseteq\xi\times\xi$
such that:
\begin{itemize}
\item $\overline{a}_{\alpha}\upharpoonright U=\overline{a}_{\beta}\upharpoonright U$
for all $\alpha,\beta\in S_{3}$
\item $E$ an equvalence such that $a_{\alpha}^{i}=a_{\alpha}^{j}\leftrightarrow(i,j)\in E$
for all $\alpha\in S_{3}$.
\item $a_{\alpha}^{i}=a_{\beta}^{j}\to i,j\in U$ for all $\alpha\neq\beta\in S_{3}$
.
\end{itemize}
We show that for every $\overline{u},\overline{v}\subseteq S_{3}$
without repetitions and of length $\ell<\omega$ , there exists a
partial automorphism $h$ of $\mathcal{M}^{++}$ such that $h(\overline{a}_{\overline{v}})=\overline{a}_{\overline{u}}$.

Indeed, define $h(a_{v_{k}}^{j})=a_{u_{k}}^{j}$for all $j<\xi,k<\ell$.
$E$ and $U$ show that $a_{v_{k_{0}}}^{j_{0}}=a_{v_{k_{1}}}^{j_{1}}\to a_{u_{k_{0}}}^{j_{0}}=a_{u_{k_{1}}}^{j_{1}}$.
Hence, $h$ is well-defined.

Let the term $\overline{\sigma}(\overline{t})$ be in ${\rm Dom}(h)$.
Since $\overline{a}_{\overline{v}}$ is closed under subterms it follows
that $h(\overline{\sigma}(\overline{t}))=\overline{\sigma}(h(\overline{t}))$.

$h$ respects $P_{\alpha}$:

\[
a_{u_{\kappa}}^{j}\in P_{\alpha}\leftrightarrow(j,\alpha)\in R_{1}\leftrightarrow a_{v_{k}}^{j}\in P_{\alpha}\]

$h$ commutes with $F_{\alpha}$: for all $a_{v_{k_{0}}}^{j_{0}},a_{v_{k_{1}}}^{j_{1}}\in{\rm Dom}(h)$,
since $\overline{a}_{v_{k_{0}}}$ is closed under it follows that
there exists $j<\xi$ such that $F_{\alpha}(a_{v_{k_{0}}}^{j_{0}})=a_{v_{k_{0}}}^{j}$.
Therefore $(j,j_{0})\in E$ and by the definition it follows \[
F_{\alpha}(h(a_{v_{k_{0}}}^{j_{0}}))=F_{\alpha}(a_{v_{k_{1}}}^{j_{0}})=a_{v_{k_{1}}}^{j}=h(a_{v_{k_{0}}}^{j})=h(F_{\alpha}(a_{v_{k_{0}}}^{j_{0}}))\]

\end{proof}

\theorem{\label{thm:x.1}Quote theorem \cite{Sh:c}*{II.2.13}}

\theorem{\label{thm:eq_repr_is_stable} $T$ is ${\rm Ex}_{\mu_{1},\kappa_{1}}^{1}({\rm Ex}_{\mu_{2},\kappa_{2}}^{2}(\mathfrak{k}^{{\rm eq}}))$-representable
implies $T$ stable.}

\begin{proof}
Let $T$ be unstable. from theorem \ref{thm:x.1} and compactness,
there exist $\varphi(\overline{x},\overline{y})\in\mathcal{L}_{T}$,
$M\models T$ and a sequence $\left\langle \overline{a}_{i}:i<\lambda\right\rangle $,
$\lambda=\left(\kappa^{\mu}\right)^{+}+\beth_{2}(\mu)^{+}$ such that
$\models\varphi(\overline{a}_{i},\overline{a}_{j})^{\mathrm{if}(i<j)}$
for all $i,j<\lambda$. Assume towards contradiction that $f:M\to I^{+}$
is a representation of $M$ in $I^{+}\in{\rm Ex}_{\mu_{1},\kappa_{1}}^{1}({\rm Ex}_{\mu_{2},\kappa_{2}}^{2}(\mathfrak{k}^{{\rm eq}}))$.
Then theorem \ref{thm:find_indep_set} implies in particular the
existence of $i,j<\lambda$, a partial automorphism $g$ of $I^{+}$
with domain and range closed under functions, such that: 
$$
g(f(\overline{a}_{i}\!^\frown\overline{a}_{j}))=f(\overline{a}_{j}\!^\frown\overline{a}_{i})
$$

from the definition of a representation we get 
$$
{\rm tp}(\overline{a}_{i}\!^\frown\overline{a}_{j},\emptyset,M)=
{\rm tp}(\overline{a}_{j}\!^\frown\overline{a}_{i},\emptyset,M)
$$
a contradiction to the definition of $\varphi$.
\end{proof}

\discussion{We now turn to the proof of the other direction of equivalence. This
will require more facts on stable theories and strongly independent
sets, defined below.}

\definition{A set $\mathbb{I}\subseteq\mathfrak{C}$ will be called strongly
independent over $A$ if the following holds:}

$\circledast$ for all $a\in\mathbb{I}$, $\mathrm{tp}(a,A\cup\mathbb{I}\backslash\left\{ a\right\} ,M)$
is the unique $p\in\mathbf{S}(A\cup\mathbb{I}\backslash\left\{ a\right\} )$
such that $p\supseteq\mathrm{tp}(a,A\cup\mathbb{I}\backslash\left\{ a\right\} )$
and $p$ does not fork over $A$.

\definition{\label{def:decompo}We call the a sequence $\left\langle \mathbb{I}_{\alpha}:\alpha<\mu\right\rangle $
of subsets of $M$ a strongly-independent decomposition (in short:
s.i.d) of length $\mu$ of $M$ if for all $\alpha<\mu$, it holds
that $\mathbb{I}_{\alpha}$ is strongly independent over $\mathbb{I}_{<\alpha}$,
and that $\left|M\right|=\mathbb{I}_{<\mu}$.\cite{Sh:c}*{II.2.13}}

\begin{convention}
We assume from this point onwards that $T$ is stable.
\end{convention}
\begin{claim}
\label{cla:str_indep_symmetry}let $a_{1},a_{2}\in\mathfrak{C}$,
$A\supseteq B_{1},B_{2}$ such that $\mathrm{tp}(a_{i},A\cup\{ a_{3-i}\})$
is non-forking over $B_{i}$, and $\mathrm{tp}(a_{i},A)$ is the unique
nonforking extension in $\mathbf{S}(A)$ of $\mathrm{tp}(a_{i},B_{i})$.
Then $(\ast)_{1}\Leftrightarrow(\ast)_{2}$ where:
\end{claim}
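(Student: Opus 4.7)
The plan is to exploit the manifest symmetry of the hypotheses (exchange $1\leftrightarrow2$ and $B_1\leftrightarrow B_2$) and reduce to proving only one implication, say $(\ast)_1\Rightarrow(\ast)_2$. Guided by the preceding definition of strong independence, I read $(\ast)_i$ as: $\mathrm{tp}(a_i,A\cup\{a_{3-i}\})$ is the unique $p\in\mathbf{S}(A\cup\{a_{3-i}\})$ extending $\mathrm{tp}(a_i,B_i)$ and not forking over $B_i$.

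Before starting, I would record two structural consequences of the hypotheses. First, since $B_i\subseteq A$ and $\mathrm{tp}(a_i,A\cup\{a_{3-i}\})$ does not fork over $B_i$, transitivity gives $a_i\downarrow_{A}a_{3-i}$; in stable $T$, symmetry of non-forking yields both $a_1\downarrow_{A}a_2$ and $a_2\downarrow_{A}a_1$. Second, the uniqueness of $\mathrm{tp}(a_i,A)$ as a non-forking extension of $\mathrm{tp}(a_i,B_i)$, together with transitivity, lets me replace $B_i$ by $A$ in $(\ast)_i$: condition $(\ast)_i$ is equivalent to ``$\mathrm{tp}(a_i,Aa_{3-i})$ is the unique non-forking extension of $\mathrm{tp}(a_i,A)$ to $\mathbf{S}(Aa_{3-i})$.''

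For the main argument, assume $(\ast)_1$ and let $a_2'$ realize an arbitrary non-forking extension of $\mathrm{tp}(a_2,A)$ to $\mathbf{S}(Aa_1)$, so $a_2'\equiv_{A}a_2$ and $a_2'\downarrow_{A}a_1$. I want to show $a_2'\equiv_{Aa_1}a_2$. Pick an $A$-automorphism $\sigma$ of $\mathfrak{C}$ with $\sigma(a_2')=a_2$ and set $a_1':=\sigma(a_1)$. Then $a_1'\equiv_{A}a_1$, and applying $\sigma$ to $a_2'\downarrow_{A}a_1$ gives $a_2\downarrow_{A}a_1'$, hence $a_1'\downarrow_{A}a_2$ by symmetry. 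Therefore $\mathrm{tp}(a_1',Aa_2)$ is a non-forking extension of $\mathrm{tp}(a_1,B_1)$ over $Aa_2$ (extension of the type is automatic from $a_1'\equiv_{A}a_1$; non-forking over $B_1$ follows by transitivity from $a_1\downarrow_{B_1}A$ and $a_1'\downarrow_{A}a_2$). Applying $(\ast)_1$ gives $a_1'\equiv_{Aa_2}a_1$, so there is an automorphism $\pi$ fixing $Aa_2$ and sending $a_1'$ to $a_1$. Then $\sigma^{-1}\circ\pi^{-1}$ is an automorphism fixing $A$, sending $a_1$ to $\sigma^{-1}(a_1')=a_1$, and sending $a_2$ to $\sigma^{-1}(a_2)=a_2'$, which yields $a_2\equiv_{Aa_1}a_2'$ as required.

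The main obstacle is not any deep calculation but careful bookkeeping: verifying that the hypothesis $(\ast)_1$ is applicable to $a_1'$ requires knowing that the transfer from $B_1$-non-forking to $A$-non-forking, and back, is lossless, which is exactly what the ``unique non-forking extension over $A$'' hypothesis buys us together with transitivity and symmetry of forking in a stable theory. Everything else is a routine composition of $A$- and $Aa_2$-automorphisms.
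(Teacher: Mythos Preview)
Your argument is correct and follows essentially the same mechanism as the paper's proof: reduce to one implication by symmetry, then transport a nonforking extension via an $A$-automorphism and invoke the uniqueness hypothesis on the other side (together with symmetry and transitivity of nonforking). The only cosmetic difference is that the paper argues the contrapositive and isolates a distinguishing formula $\varphi$, whereas you work directly at the level of types and automorphisms; the underlying idea is the same.
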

$(\ast)_{i}$ $\mathrm{tp}(a_{i},B_{i})$ has a unique nonforking
extension whose domain is $A\cup\{ a_{3-i}\}$.

\begin{proof}
it is sufficient to prove $\neg(\ast)_{2}\Rightarrow\neg(\ast)_{1}$,
since the converse follows by symmetry. 

Assume that $\mathrm{tp}(a_{2},B_{2})$ has two distinct nonforking
extensions $p_{1},p_{2}\in\mathbf{S}(A\cup\{ a_{1}\})$. 

Then, there exists $\varphi\in p_{1},\ \neg\varphi\in p_{2},\ \varphi=\varphi(x,a_{1},\bar{c})$.
Let $b_{1},b_{2}$ realize $p_{1},p_{2}$, respectively. 

$\mathrm{tp}(b_{i},A)=p_{i}\upharpoonright A$ is a nonforking extension
of $p$ implies $p_{1}\upharpoonright A=p_{2}\upharpoonright A$.
Thus, for $i<2$ There exist elementary maps $F_{i}$ in $\mathfrak{C}$
so that $F_{i}\upharpoonright A=\mathrm{id}_{A},\; F_{i}(b_{i})=a_{2}$.

Let $q_{i}\in\mathbf{S}(A\cup\{ b_{i}\})$ be a nonforking extension
of $\mathrm{tp}(a_{1},B_{1})$. 

Then $F_{i}(q_{i})\in\mathbf{S}(A\cup\{ a_{2}\})$ is a nonforking
extension of $\mathrm{tp}(a_{1},B_{1})$ ($F_{i}\upharpoonright A=\mathrm{id}_{A}$,
and elementary maps preserve nonforking).

Now note that $\models\varphi(b_{1},a_{1},\bar{c})\wedge\neg\varphi(b_{2},a_{1},\bar{c})$,
hence $\varphi(a_{2},x,\bar{c})\in F_{1}(q_{1})$ and $\neg\varphi(a_{2},x,\bar{c})\in F_{2}(q_{2})$.
Therefore, $F_{i}(q_{i})$ are distinct extensions, as required.
\end{proof}

\definition{An ordered partition $\left\langle \mathbb{J}_{\alpha}:\alpha<\mu^{\prime}\right\rangle $
is called an order-preserving refinement of the ordered partition
$\left\langle \mathbb{I}_{\alpha}:\alpha<\mu^{\prime}\right\rangle $
if it is a refinement as a partition and $\alpha^{\prime}<\beta^{\prime}$
for all $\alpha<\beta<\mu,\;\alpha^{\prime},\beta^{\prime}<\mu^{\prime}$
such that $\mathbb{I}_{\alpha}\supseteq\mathbb{J}_{\alpha^{\prime}}\;\mathbb{I}_{\beta}\supseteq\mathbb{J}_{\beta^{\prime}}$.}

\begin{claim}
\label{cla:sub_decompo}If $\left\langle \mathbb{I}_{\alpha}:\alpha<\mu\right\rangle $
is an s.i.d of $M$, then every order-preserving refinement of it
is an s.i.d. of $M$. 
\end{claim}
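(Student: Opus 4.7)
The plan is to fix $\beta<\mu'$ and $a\in\mathbb{J}_\beta$, and to reduce strong independence of $\mathbb{J}_\beta$ at $a$ over $\mathbb{J}_{<\beta}$ to the already-known strong independence of $\mathbb{I}_\alpha$ over $\mathbb{I}_{<\alpha}$, where $\alpha<\mu$ is the unique index with $\mathbb{J}_\beta\subseteq\mathbb{I}_\alpha$. The key preliminary step is the containment
\[
\mathbb{I}_{<\alpha}\;\subseteq\;\mathbb{J}_{<\beta}\;\subseteq\;\mathbb{I}_{<\alpha}\cup\mathbb{I}_\alpha,
\]
read off from order-preservation: any block $\mathbb{J}_{\beta'}\subseteq\mathbb{I}_\gamma$ with $\gamma<\alpha$ must satisfy $\beta'<\beta$, which yields the left inclusion, while every remaining block of $\mathbb{J}_{<\beta}$ lies inside $\mathbb{I}_\alpha$. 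Setting $B:=\mathbb{J}_{<\beta}\cap\mathbb{I}_\alpha$, we have $\mathbb{J}_{<\beta}=\mathbb{I}_{<\alpha}\cup B$ with $B\subseteq\mathbb{I}_\alpha\setminus\mathbb{J}_\beta$, and therefore
\[
\mathbb{J}_{<\beta}\cup\mathbb{J}_\beta\setminus\{a\}\;\subseteq\;\mathbb{I}_{<\alpha}\cup\mathbb{I}_\alpha\setminus\{a\}.
\]

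For the non-forking clause, I use the hypothesis that $q:=\mathrm{tp}(a,\mathbb{I}_{<\alpha}\cup\mathbb{I}_\alpha\setminus\{a\})$ does not fork over $\mathbb{I}_{<\alpha}$. Restriction of the parameter set preserves non-forking, so $\mathrm{tp}(a,\mathbb{J}_{<\beta}\cup\mathbb{J}_\beta\setminus\{a\})=q\upharpoonright(\mathbb{J}_{<\beta}\cup\mathbb{J}_\beta\setminus\{a\})$ also does not fork over $\mathbb{I}_{<\alpha}$; then monotonicity of non-forking in the base for stable theories (if a type over $C$ does not fork over $A$ and $A\subseteq B\subseteq C$, then it does not fork over $B$) upgrades this to non-forking over the intermediate set $\mathbb{J}_{<\beta}$.

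For the uniqueness clause, let $p\in\mathbf{S}(\mathbb{J}_{<\beta}\cup\mathbb{J}_\beta\setminus\{a\})$ be any non-forking extension of $\mathrm{tp}(a,\mathbb{J}_{<\beta})$. The same restriction argument gives that $\mathrm{tp}(a,\mathbb{J}_{<\beta})$ does not fork over $\mathbb{I}_{<\alpha}$, so transitivity of non-forking yields that $p$ itself does not fork over $\mathbb{I}_{<\alpha}$. Extending $p$ to some $\hat p\in\mathbf{S}(\mathbb{I}_{<\alpha}\cup\mathbb{I}_\alpha\setminus\{a\})$ without forking and applying transitivity once more, $\hat p$ is a non-forking extension of $\mathrm{tp}(a,\mathbb{I}_{<\alpha})$ over $\mathbb{I}_{<\alpha}\cup\mathbb{I}_\alpha\setminus\{a\}$; strong independence of $\mathbb{I}_\alpha$ then forces $\hat p=q$, whence $p=q\upharpoonright(\mathbb{J}_{<\beta}\cup\mathbb{J}_\beta\setminus\{a\})=\mathrm{tp}(a,\mathbb{J}_{<\beta}\cup\mathbb{J}_\beta\setminus\{a\})$, as required. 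The only real obstacle is correctly extracting the sandwich inclusion from the order-preservation definition; after that the argument is a routine shuffle of restriction, monotonicity, and transitivity of non-forking in stable theories, and the requirement $|M|=\mathbb{J}_{<\mu'}$ is immediate since $\bigcup_\beta\mathbb{J}_\beta=\bigcup_\alpha\mathbb{I}_\alpha=|M|$.
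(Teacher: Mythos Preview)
Your proof is correct and follows essentially the same route as the paper: establish the sandwich inclusion $\mathbb{I}_{<\alpha}\subseteq\mathbb{J}_{<\beta}\subseteq\mathbb{I}_{\leq\alpha}$, deduce non-forking by restriction and monotonicity, and obtain uniqueness by extending a competitor to $\mathbf{S}(\mathbb{I}_{\leq\alpha}\setminus\{a\})$ and invoking strong independence of $\mathbb{I}_\alpha$. If anything, your write-up is more explicit than the paper's about the sandwich inclusion and the transitivity step needed to bring the base down from $\mathbb{J}_{<\beta}$ to $\mathbb{I}_{<\alpha}$.
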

\begin{proof}
Let $\alpha^{\prime}<\mu^{\prime}$. we show that $\mathbb{J}_{\alpha^{\prime}}\subseteq\mathbb{I}_{\alpha}$
is strongly independent over $\mathbb{J}_{<\alpha^{\prime}}$. Then
let $a\in\mathbb{J}_{\alpha^{\prime}}$. 

$\mathrm{tp}(a,\mathbb{I}_{\leq\alpha}\backslash\left\{ a\right\} )$
is nonforking over $\mathbb{I}_{<\alpha}$ and hence, the reduct $\mathrm{tp}(a,\mathbb{J}_{\leq\alpha^{\prime}})$
is nonforking over $\mathbb{I}_{<\alpha}$, nor does it fork over
the larger $\mathbb{J}_{<\alpha^{\prime}}$. On the other hand, if
$\mathrm{tp}(a,\mathbb{J}_{<\alpha^{\prime}})\subseteq q\in\mathbf{S}(\mathbb{J}_{\leq\alpha^{\prime}}\backslash\left\{ a\right\} )$
is nonforking over $\mathbb{J}_{<\alpha}$, it has an extension $q\subseteq q^{\prime}\in\mathbf{S}(\mathbb{I}_{\leq\alpha}\backslash\left\{ a\right\} )$
which is nonforking over $\mathbb{J}_{<\alpha}$. $a\in\mathbb{I}_{\alpha}$
implies that $\mathrm{tp}(a,\mathbb{J}_{\leq\alpha^{\prime}}\backslash\left\{ a\right\} )\subseteq\mathrm{tp}(a,\mathbb{I}_{\leq\alpha}\backslash\left\{ a\right\} )$
is nonforking over $\mathbb{I}_{<\alpha}$ and since $\mathbb{I}_{\alpha}$
is strongly independent over $\mathbb{I}_{<\alpha}$ we get $q^{\prime}=\mathrm{tp}(a,\mathbb{I}_{\leq\alpha}\backslash\left\{ a\right\} )$,
and in particular, \[
q^{\prime}\upharpoonright\left(\mathbb{J}_{\leq\alpha^{\prime}}\backslash\left\{ a\right\} \right)=\mathrm{tp}(a,\mathbb{J}_{\leq\alpha^{\prime}}\backslash\left\{ a\right\} )\]

therefore $\mathbb{J}_{\alpha^{\prime}}$ is as required, nonforking
over $\mathbb{J}_{<\alpha^{\prime}}$.
\end{proof}

\theorem{\label{thm:nonforking_contradicting_types} Let $p,q\in\mathbf{S}(B)$
be distinct, nonforking over $A\subseteq B$. Then there exists an
$E\in\mathrm{FE}(A)$ such that: \[
p(x)\cup q(y)\vdash\neg E(x,y)\]
}

(cf\.{ }\cite{Sh:c}*{III;2.9(2)}

\begin{claim}
\label{cla:nonforking_contr_contr}Let $A\subset B$ be such that
if $\varphi$ is a formula over $B$ which is almost over $A$, then
there exists a formula over $A$ which is equivalent to $\varphi$
modulo $T$. If $p,q\in\mathbf{S}(B)$ are distinct nonforking over
$A$, There exists a $\varphi_{\ast}(x,\overline{c})$ such that $p\vdash\varphi_{\ast},\; q\vdash\neg\varphi_{\ast}$.
\end{claim}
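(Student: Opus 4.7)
My plan is to reduce the claim to Theorem~\ref{thm:nonforking_contradicting_types}, build a single separating formula over $B$ whose realization set is a union of $E$-classes, and then invoke the hypothesis on $B$ to promote it to a formula over $A$. The intended reading of $\varphi_{\ast}(x,\bar c)$ in the conclusion is that $\bar c$ lies in $A$; otherwise the statement would be trivial, since distinct types in $\mathbf{S}(B)$ already differ on some formula over $B$.

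First I would apply Theorem~\ref{thm:nonforking_contradicting_types} to obtain $E\in\mathrm{FE}(A)$ with $p(x)\cup q(y)\vdash\neg E(x,y)$. By compactness there exist $\chi(x)\in p$ and $\chi'(y)\in q$, both formulas over $B$, with $\chi(x)\wedge\chi'(y)\wedge E(x,y)\vdash\bot$. I then set
$$
\psi(x):=\exists y\,\bigl(\chi(y)\wedge E(x,y)\bigr).
$$
For $a\models p$, reflexivity of $E$ and $\chi\in p$ witness $\psi(a)$ via $y=a$; for $a\models q$, the above inconsistency combined with $\chi'(a)\in q$ and the symmetry of $E$ rules out every witness $y$, giving $\neg\psi(a)$. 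Hence $\psi$ is a formula over $B$ with $p\vdash\psi$ and $q\vdash\neg\psi$.

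The key step is to verify that $\psi$ is almost over $A$. By transitivity, the realization set of $\psi$ is closed under $E$-equivalence, so $\psi$ defines a union of $E$-classes; since $E$ has only finitely many classes, say $n$, there are only $2^n$ such unions. Each $A$-conjugate $\psi_{\sigma}$ has the form $\exists y\,(\chi_{\sigma}(y)\wedge E(x,y))$ (because $E$ is over $A$, so $\sigma$ fixes it) and therefore also defines a union of the same $n$ classes, so the $A$-orbit of $\psi$ is finite. The hypothesis on $B$ then produces a formula $\varphi_{\ast}(x,\bar c)$ over $A$ equivalent modulo $T$ to $\psi$, and this $\varphi_{\ast}$ separates $p$ from $q$ as desired.

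The delicate point I anticipate is choosing the shape of $\psi$: Theorem~\ref{thm:nonforking_contradicting_types} hands us an equivalence relation rather than a unary formula, so one must repackage the assertion that $p$ and $q$ lie in different $E$-classes into a single unary formula over $B$ whose realization set is structurally constrained enough---namely, a union of $E$-classes---for the almost-over-$A$ conclusion to follow. Once $\psi$ is defined in this way, the crude combinatorial count by $2^{n}$ and the hypothesis finish the proof.
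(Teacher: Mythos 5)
Your proof is correct and follows essentially the same route as the paper: both invoke Theorem~\ref{thm:nonforking_contradicting_types} to get $E\in\mathrm{FE}(A)$ and then build a separating formula over $B$ whose realization set is a union of $E$-classes (the paper takes $\bigvee_{i\in w}E(x,b_i)$ over class representatives, you take $\exists y\,(\chi(y)\wedge E(x,y))$ with $\chi\in p$ obtained by compactness). Your explicit check that the formula is almost over $A$, followed by the hypothesis to replace it by a formula over $A$, is if anything more complete than the paper's ending, which argues $\mathrm{Aut}(\mathfrak{C},B)$-invariance and stops at a formula over $B$; your reading of the conclusion (parameters from $A$) is the one needed where the claim is later applied.
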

\begin{proof}
By \ref{thm:nonforking_contradicting_types}, there exists an $E\in FE(A)$
such that $p(x)\cup q(y)\vdash\neg E(x,y)$. Let $\left\{ b_{i}:i<n(E)\right\} \subseteq\mathfrak{C}$
represent the equivalence classes of $E$. Define $w:=\left\{ i<n(E):p(x)\cup\left\{ E(x,b_{i})\right\} \ \textrm{is consistent}\right\} $,
and let $\varphi(x):=\bigvee_{i\in w}E(x,b_{i})$. Then
\begin{itemize}
\item w.l.o.g for all $i\in w$, $b_{i}\in\mathfrak{C}$ realizes $p$.
\item $p(x)\vdash\varphi(x)$ ( if $a$ realizes $p$ there exists a $b_{i}$
such that $\models aEb_{i}$ since the $b_{i}$ are representatives
of the equivalence classes of $E$. on the other hand, $i$ must belong
to $w$, which implies that $\varphi(a)$ holds ) .
\item Similarly, $q(x)\vdash\neg\varphi(x)$ since if $a$ realizes $q$
then $p(x)\cup q(y)\vdash\neg E(x,y)$ therefore $\neg E(b_{i},a)$
for all $i\in w$, therefore $\models\neg\varphi(a)$.
\item $\varphi(x)$ is preserved by members of ${\rm Aut}(\mathfrak{C},B)$:
Let $f\in{\rm Aut}(\mathfrak{C},B)$. Then $f$ preserves $E$ (and
its equivalence classes in $\mathfrak{C}$) and $p$ ($\mathrm{Dom}(p)=B$)
implying:

\begin{itemize}
\item $p(x)\cup\left\{ E(x,b_{i})\right\} \Leftrightarrow p(x)\cup\left\{ E(x,f(b_{i}))\right\} $
holds for all $i<n(E)$.
\item $\neg E(f(b_{i}),f(b_{j}))$ for all $i,j<n(E),\; i\neq j$.
\item $f$ acts as a permutation on $\mathfrak{C}/E$, and when reduced
also on $\left\{ b_{i}/E:i\in w\right\} $, therefore: \[
f(\varphi(\mathfrak{C}))=f(\bigcup_{i\in w}b_{i}/E)=\bigcup_{i\in w}f(b_{i})/E=\varphi(\mathfrak{C})\]

\end{itemize}
\end{itemize}
implying $\models\varphi(x)\equiv f(\varphi(x))$. Lemma Sh:c,III.2.3]{[}
implies that $\varphi(x)$ has an equivalent formula $\varphi_{\ast}$
over $B$, as needed.
\end{proof}
\begin{claim}
\label{cla:kappa_T}For all $p\in\mathbf{S}^{m}(B)$ there exists
$A\subseteq B,\;\left|A\right|<\kappa(T)$ such that $p$ does not
fork over $A$. Also, $\kappa(T)\leq\left|T\right|^{+}$.
\end{claim}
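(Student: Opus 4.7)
The first assertion just restates the defining property of $\kappa(T)$, namely the least cardinal $\kappa$ such that every complete type does not fork over some subset of its domain of cardinality $<\kappa$; so I would dispatch it in one sentence by invoking this definition.

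The plan for $\kappa(T)\leq |T|^{+}$ is by contradiction. Assume $\kappa(T)>|T|^{+}$; then for some $B$ there is $p\in\mathbf{S}^{m}(B)$ forking over every subset of $B$ of size $\leq|T|$. I would build by transfinite induction an increasing continuous chain $(A_{\alpha})_{\alpha<|T|^{+}}$ of subsets of $B$ with $|A_{\alpha}|\leq|T|$, together with formulas $\varphi_{\alpha}(x,\bar y_{\alpha})\in\mathcal L_{T}$ and parameter tuples $\bar a_{\alpha}\subseteq A_{\alpha+1}$, so that $\varphi_{\alpha}(x,\bar a_{\alpha})\in p$ and $\varphi_{\alpha}(x,\bar a_{\alpha})$ witnesses the forking of $p\!\upharpoonright\! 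A_{\alpha+1}$ over $A_{\alpha}$. At successor stages the standing hypothesis furnishes such a formula, whose parameters are absorbed into $A_{\alpha+1}$; at limit stages we take unions, which remain of size $\leq|T|$ since $|T|^{+}$ is regular. Pigeonhole applied to the $|T|$-many formulas of $\mathcal L_{T}$ against the $|T|^{+}$-many indices then yields a cofinal $S\subseteq|T|^{+}$ and a single $\varphi$ with $\varphi_{\alpha}=\varphi$ for every $\alpha\in S$. Restricting the chain to $S$ produces an infinite strictly descending sequence of forking instances of the single formula $\varphi$.

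The main obstacle, and really the only substantive point, is converting this chain into a contradiction. The key input I would use is the classical local finiteness principle for stable theories (cited from \cite{Sh:c}): for each fixed $\varphi$ there is no infinite forking chain on $\varphi$-instances (equivalently $\kappa_{\varphi}(T)<\omega$, typically read off from a $\varphi$-rank being ordinal-valued in stable $T$). This directly contradicts the infinite $\varphi$-chain produced above and completes the bound.
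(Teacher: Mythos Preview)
The paper does not actually prove this claim: immediately after the statement it simply writes ``(cf.\ \cite{Sh:c}*{III;3.2, 3.3})'' and moves on, treating both assertions as quoted facts from Shelah's \emph{Classification Theory}. So there is no in-paper argument to compare against; your proposal supplies a proof where the paper gives only a reference.

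That said, your sketch is essentially the standard argument one finds at the cited location, and it is correct in outline: negate the bound, build a continuous forking chain of length $|T|^{+}$ inside $B$, pigeonhole on the formula schema to get a single $\varphi$ used cofinally, and contradict the local finiteness $\kappa_{\varphi}(T)<\omega$. One small wording point: saying the $\varphi$-rank is ``ordinal-valued'' is too weak to yield the contradiction; what you need (and what stability gives) is that $R(x{=}x,\varphi,2)<\omega$, i.e.\ the rank is a \emph{finite} natural number, so an infinite strictly decreasing chain of $\varphi$-forking instances is impossible. Your parenthetical ``$\kappa_{\varphi}(T)<\omega$'' is the right formulation; I would lead with that rather than with ordinal-valuedness.
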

(cf. \cite{Sh:c}*{III;3.2, 3.3})

\begin{claim}
\label{cla:count_almost_over}The number of formulas almost over $A$
is (up to logical equivalence) at most $\left|A\right|+\left|T\right|$
\end{claim}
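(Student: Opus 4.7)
The plan is to encode each formula almost over $A$ as a pair consisting of a $\emptyset$-definable formula and an algebraic class in $\mathfrak{C}^{\mathrm{eq}}$, and then to bound these two data separately.

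First I would recall that $\varphi(x,\bar{c})$ is almost over $A$ iff the $\mathrm{Aut}(\mathfrak{C}/A)$-orbit of $\varphi(x,\bar{c})$ (equivalently, of $\varphi(\mathfrak{C},\bar{c})$) is finite. Given a formula $\psi(x,\bar{y})$ in the language, I would introduce the $\emptyset$-definable equivalence relation
\[
E_{\psi}(\bar{y}_1,\bar{y}_2)\;:\Longleftrightarrow\;\forall x\bigl(\psi(x,\bar{y}_1)\leftrightarrow\psi(x,\bar{y}_2)\bigr),
\]
so that two parameters give logically equivalent instances of $\psi$ exactly when they are $E_\psi$-equivalent. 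The orbit of $\bar{c}$ under $\mathrm{Aut}(\mathfrak{C}/A)$ maps onto the orbit of $\varphi(x,\bar{c})$ modulo equivalence, so $\varphi(x,\bar{c})$ almost over $A$ translates into $\bar{c}/E_\psi$ having finite orbit, i.e.\ $\bar{c}/E_\psi\in\mathrm{acl}^{\mathrm{eq}}(A)$ (viewing $E_\psi$-classes as elements of $\mathfrak{C}^{\mathrm{eq}}$).

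Next I would observe that the formula $\varphi(x,\bar{c})$ is determined, up to logical equivalence, by the pair $\bigl(\psi(x,\bar{y}),\,\bar{c}/E_\psi\bigr)$. Hence the number of formulas almost over $A$ is bounded by the number of such pairs. The number of formulas $\psi(x,\bar{y})$ in the language (for $\bar{y}$ ranging over all finite lengths) is at most $|T|$ up to equivalence. For each such $\psi$, the number of admissible classes is at most $|\mathrm{acl}^{\mathrm{eq}}(A)|$, which by the standard bound on algebraic closure in $\mathfrak{C}^{\mathrm{eq}}$ is at most $|A|+|T|$. Multiplying gives $|T|\cdot(|A|+|T|)=|A|+|T|$, which is the desired bound.

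I do not anticipate a serious obstacle: the only subtle point is making sure the translation "$\varphi$ almost over $A$" $\Longleftrightarrow$ "$\bar{c}/E_\psi\in\mathrm{acl}^{\mathrm{eq}}(A)$" is done correctly, which is immediate once one notes that $\mathrm{Aut}(\mathfrak{C}/A)$ acts on $E_\psi$-classes and that the finiteness of the orbit of $\varphi(x,\bar{c})$ up to equivalence is exactly the finiteness of the orbit of $\bar{c}/E_\psi$. The cardinal arithmetic is routine once $A$ and $T$ are taken in the usual (infinite) convention.
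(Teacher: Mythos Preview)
Your argument is correct and is the standard proof of this fact: encode each formula almost over $A$ by the pair $(\psi,\bar c/E_\psi)$, observe that ``almost over $A$'' means exactly that $\bar c/E_\psi$ lies in $\mathrm{acl}^{\mathrm{eq}}(A)$, and then count. The only cosmetic point is that when you write ``at most $|\mathrm{acl}^{\mathrm{eq}}(A)|$'' you are implicitly using that this cardinal is $\le |A|+|T|$; you say so, but it may be worth one line noting that the bound follows sort-by-sort, since in each imaginary sort the algebraic elements over $A$ are covered by the $\le |A|+|T|$ algebraic formulas with parameters from $A$.

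As for comparison with the paper: there is nothing to compare. The paper does not prove this claim at all; it simply cites \cite{Sh:c}*{III;2.2(2)}. Your argument is essentially the one found there (the $E_\psi$-trick and passage to $\mathfrak{C}^{\mathrm{eq}}$ is exactly how Shelah's book handles it), so you have supplied the missing proof rather than diverged from an existing one.
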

(cf. \cite{Sh:c}*{III;2.2(2)})

\lemma{\label{lem:stable_exist_decomp} if $M\models T$ then there exists
an s.i.d of length $\mu=\left|T\right|^{+}$}

\begin{proof}
We construct inductively a sequence $\left\langle \mathbb{I}_{\alpha}:\alpha<\mu\right\rangle $
such that $\mathbb{I}_{\alpha}$ is strongly independent over $\mathbb{I}_{<\alpha}$
and is moreover maximal with respect to this property ( for all $\mathbb{I}\supseteq\mathbb{I}_{\alpha}$
is not strongly independent over $\mathbb{I}_{<\alpha}$), for all
$\alpha$.

Assume towards contradiction that $a\in M\backslash\mathbb{I}_{<\mu}$.
By the definition of $\kappa(T)$ and \ref{cla:kappa_T} we get a
set \[
B\subseteq\mathbb{I}_{<\mu},\;\left|B\right|<\kappa(T)\leq\left|T\right|^{+}\]
such that $p(x):=\mathrm{tp}(a,\mathbb{I}_{<\mu})$ is nonforking
over $B$, and there exists an $\alpha_{0}(\ast)<\mu$ such that $\mathbb{I}_{<\alpha_{0}(\ast)}\supseteq B$.

Let \[
\Gamma:=\left\{ \varphi(x;\bar{c}):\varphi(x,\bar{c})\textrm{ is almost over }B,\;\varphi(x;\bar{y})\in\mathcal{L},\;\bar{c}\in^{\lg\bar{y}}\mathbb{I}_{<\mu}\right\} \]

By claim \ref{cla:count_almost_over}, there exists a set $\Gamma_{\ast}\subseteq\Gamma,\;\left|\Gamma_{\ast}\right|\leq\left|B\right|+\left|T\right|<\mathrm{cf}(\left|T\right|^{+})$
of representatives (by logical equivalence) of the formulas almost
over $B$. Hence, there exists $\alpha_{1}(\ast)<\mu$ such that $\bar{b}\subseteq\mathbb{I}_{<\alpha(\ast)}$
for all $\varphi(x,\bar{b})\in\Gamma_{\ast}$ . Let $\alpha(\ast)=\max_{i<2}\left\{ \alpha_{i}(\ast)\right\} $. 

We now show that $p\upharpoonright\mathbb{I}_{\leq\alpha(\ast)}$
is the only extension in $\mathbf{S}(\mathbb{I}_{\leq\alpha(\ast)})$
of $p\upharpoonright\mathbb{I}_{<\alpha(\ast)}$ which is nonforking
over $\mathbb{I}_{<\alpha(\ast)}$:

Indeed, $p$ is nonforking over $B$. Let $q\in\mathbf{S}(\mathbb{I}_{\leq\alpha(\ast)})$
a nonforking extension of $p\upharpoonright\mathbb{I}_{<\alpha(\ast)}$.

By transitivity of non-forking, $q$ is nonforking over $B$. Assume
towards contradiction that $q\neq p$. Then by \ref{thm:nonforking_contradicting_types}
there exists an $E\in FE(B)$ such that $q(x)\cup p(y)\vdash\neg E(x,y)$,
and particularly $q(x)\vdash\neg E(x,a)$. 

The formula $E(x,a)$ is almost over $B$, therefore by the choice
of $\alpha_{1}(\ast)$, there exists a $\varphi(x,\bar{b})$ logically
equivalent to $E(x,a)$ in $T$, with $\bar{b}\subseteq\mathbb{I}_{<\alpha(\ast)}$.

Now, since $E(a,a)$, it also holds that $\models\varphi(a,\bar{b})$,
and $\bar{b}\subseteq\mathbb{I}_{<\alpha(\ast)}$ implies $\varphi(x,\bar{b})\in\mathrm{tp}(a,\mathbb{I}_{<\alpha(\ast)})=q\upharpoonright\mathbb{I}_{<\alpha(\ast)}$
, a contradiction.

In particular, $\mathrm{tp}(a,\mathbb{I}_{\leq\alpha(\ast)})$ is
the only nonforking extension of $\mathrm{tp}(a,\mathbb{I}_{<\alpha(\ast)})$
in $\mathbf{S}\left(\mathbb{I}_{\leq\alpha(\ast)}\backslash\left\{ b\right\} \right)$
. By the choice of $\mathbb{I}_{\alpha(\ast)}$ it follows for all
$b\in\mathbb{I}_{\alpha(\ast)}$ that $\mathrm{tp}(b,\mathbb{I}_{\leq\alpha(\ast)}\backslash\left\{ b\right\} )$
is the only nonforking extension of $\mathrm{tp}(b,\mathbb{I}_{<\alpha(\ast)})$
in $\mathbf{S}\left(\mathbb{I}_{\leq\alpha(\ast)}\backslash\left\{ b\right\} \right)$.

\relax From claim \ref{cla:str_indep_symmetry} it follows that $\mathrm{tp}(b,\mathbb{I}_{\leq\alpha(\ast)}\backslash\left\{ b\right\} \cup\left\{ a\right\} )$
is the only nonforking extension of $\mathrm{tp}(b,\mathbb{I}_{<\alpha(\ast)})$
in $\mathbf{S}(\mathbb{I}_{\leq\alpha(\ast)}\backslash\left\{ b\right\} \cup\left\{ a\right\} )$.

So, $\circledast$ holds for $\mathbb{I}_{\alpha(\ast)}\cup\left\{ a\right\} $
 ( with respect to $\mathbb{I}_{<\alpha(\ast)}$) contradicting the
maximality of $\mathbb{I}_{\alpha(\ast)}$.
\end{proof}
\begin{claim}
\label{cla:Forking-is-elementary}Forking is preserved under elementary
maps \cite{Sh:c}*{III.1.5}
\end{claim}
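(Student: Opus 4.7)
The plan is to reduce the statement to the equivalent characterization of forking in terms of finite disjunctions of dividing formulas, and then transport the witnesses via an automorphism of $\mathfrak{C}$. Given an elementary map $f$ whose domain contains the relevant parameters, by the sufficient saturation of $\mathfrak{C}$ one extends $f$ to an automorphism $F\in{\rm Aut}(\mathfrak{C})$; it then suffices to show that $F$ preserves the forking relation, since the original $f$-preimages are recovered by restriction.

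Recall the standard characterization: $\mathrm{tp}(a,B)$ forks over $A$ iff some $\varphi(x,\bar b)\in\mathrm{tp}(a,B)$ implies (modulo $T$) a finite disjunction $\bigvee_{i<n}\psi_i(x,\bar c_i)$ in which each conjunct divides over $A$; and $\psi_i(x,\bar c_i)$ divides over $A$ iff there exist an $A$-indiscernible sequence $\langle\bar c_i^{\,j}:j<\omega\rangle$ with $\bar c_i^{\,0}=\bar c_i$ and some $k_i<\omega$ such that $\{\psi_i(x,\bar c_i^{\,j}):j<\omega\}$ is $k_i$-inconsistent. Every ingredient in this description -- membership in a type computed in $\mathfrak{C}$, implication modulo $T$, $A$-indiscernibility, and $k$-inconsistency of a set of formulas with parameters -- is manifestly invariant under automorphisms of $\mathfrak{C}$.

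Therefore, if $\mathrm{tp}(a,B)$ forks over $A$ as witnessed above, then applying $F$ produces analogous witnesses that $\mathrm{tp}(F(a),F(B))$ forks over $F(A)$: $F(\varphi)(x,F(\bar b))\in\mathrm{tp}(F(a),F(B))$, the implication $F(\varphi)\vdash\bigvee_i F(\psi_i)$ persists, each $\langle F(\bar c_i^{\,j}):j<\omega\rangle$ is $F(A)$-indiscernible, and each image set inherits $k_i$-inconsistency. The converse direction is obtained symmetrically via $F^{-1}$.

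There is no substantive obstacle: the content is essentially that forking is invariant under automorphisms of the monster, which is transparent once one adopts a purely combinatorial definition over the base. The only subtlety is the choice of formulation -- the dividing-based characterization makes invariance immediate, whereas a definability-of-types approach would require an extra step identifying the ${\rm Aut}(\mathfrak{C}/A)$-invariant extensions on both sides before concluding.
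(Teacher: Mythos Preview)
Your argument is correct and is the standard one: extend the elementary map to an automorphism of the monster using strong homogeneity, then observe that every clause in the dividing characterization of forking is first-order over the relevant parameters and hence automorphism-invariant. There is nothing to compare against here, since the paper does not supply its own proof of this claim; it is recorded as a known fact with a citation to \cite{Sh:c}*{III.1.5}. Your write-up would serve perfectly well as the omitted justification.
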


\theorem{\label{thm:definability-for-types}definability for types cf. 
\cite{Sh:c}*{II;2.2}}

\subsection{Representing stable theories.}

\theorem{If $M\models T$, then $M$ is ${\rm Ex}_{\left|T\right|^{+},\left|T\right|}^{1}(\mathfrak{k}^{{\rm eq}})$-representable.}

\begin{proof}
By \ref{lem:stable_exist_decomp} we get a strongly independent decomposition
of $M$: $\left\langle \mathbb{I}_{\alpha}:\alpha<\left|T\right|^{+}\right\rangle $.
By Claim \ref{cla:sub_decompo} we assume w.l.o.g $\left|\mathbb{I}_{1}\right|=\left|\mathbb{I}_{0}\right|=1$.

Define the structure $I^{+}\in{\rm Ex}_{\left|T\right|^{+},\left|T\right|}^{1}(\mathfrak{k}^{\mathrm{eq}})$
as follows:
\begin{enumerate}
\item $\left|I^{+}\right|=\left|M\right|$.
\item for all $\alpha<\left|T\right|^{+}$, $P_{\alpha}^{I^{+}}=\mathbb{I}_{\alpha}$
. 
\item for all $\varphi(x,\bar{y})\in\mathcal{L}_{M}$ define $n$ one-place
partial functions ( let $n=\lg\bar{z}$ ) $\left\{ F_{\varphi(x,\bar{y}),j}^{I^{+}}(x):j<n\right\} $
as follows: 

\begin{enumerate}
\item $\mathrm{Dom}F_{\varphi(x,\overline{y}),j}^{I^{+}}=\left|M\right|\backslash\left(\mathbb{I}_{0}\cup\mathbb{I}_{1}\right)$.
\item By Theorem \ref{thm:definability-for-types} we get for every $\varphi(x,\bar{y)}\in\mathcal{L}_{M}$
another formula $\psi_{\varphi}(\bar{y},\bar{z})\in\mathcal{L}_{M}$,
such that for all $2\leq\alpha<\mu$, $a\in\mathbb{I}_{\alpha}$ there
exists $\bar{c}_{a}\in^{\lg\bar{z}}\mathbb{I}_{<\alpha}$ such that
for all $\bar{b}\in\mathbb{I}_{\alpha}$, $\models\varphi[a,\bar{b}]\Leftrightarrow\models\psi_{\varphi}[\bar{b},\bar{c}_{a}]$
holds.
\item For all $2\leq\alpha<\mu$ and $a\in\mathbb{I}_{\alpha}$, let $F_{\varphi(x,\bar{y}),j}^{I^{+}}(a):=\left(\bar{c}_{a}\right)_{j}$
\end{enumerate}
\item Add $\left|T\right|$ partial functions $\left\langle \left(F_{i}^{*}\right)^{I^{+}}:i<\left|T\right|\right\rangle $
as follows:

\begin{enumerate}
\item $\mathrm{Dom}F_{i}^{\ast}=\left|M\right|\backslash\mathbb{I}_{<2}$
\item Fix $\alpha>1$, then there exists $\left|B\right|\leq\left|T\right|$
such that for every $\varphi(\overline{x},\overline{c})$ over $\mathbb{I}_{<\alpha}$
which is almost over $B$ there exists a $\theta(\overline{x},\overline{d})$
over $B$ such that $\models\forall\overline{x}\left(\theta(\overline{x},\overline{d})\leftrightarrow\varphi(\overline{x},\overline{c})\right)$:

\begin{enumerate}
\item Let $\left|B_{0}\right|<\kappa(T)\leq\left|T\right|^{+},\; B_{0}\subseteq\mathbb{I}_{<\alpha}$
such that $\mathrm{tp}(a,\mathbb{I}_{<\alpha})$ does not fork over
$B_{0}$.
\item Assume $B_{n}$ is defined and let\[
B_{n+1}:=B_{n}\cup\left\{ \overline{c}:\varphi(\overline{x},\overline{c})\in S^{\prime}\right\} \]
where $S^{\prime}$ is a complete set of representatives of $S$,
relative to logical equivalence in $T$\[
S:=\left\{ \varphi(\overline{x},\overline{c})\in\mathcal{L}_{T}:\overline{c}\subseteq\mathbb{I}_{<\alpha},\;\varphi\text{ is almost over }B_{n}\right\} \]
by \ref{cla:count_almost_over} we can assume w.l.o.g $\left|S^{\prime}\right|\leq\left|T\right|+\left|B_{n}\right|=\left|T\right|$.
\item Then the set $B=\bigcup_{n<\omega}B_{n}$ is a s required.
\end{enumerate}
\item Let $\left\langle b_{i}:i<\left|T\right|\right\rangle $ enumerate
$B$ (possibly with repetitions). We define $F_{i}(a)=b_{i}$.
\end{enumerate}
\item Let $f:M\to I^{+}$ be defined as $f(a)=a$ for all $a\in\left|M\right|$.
\end{enumerate}
Let $h$ be a partial automorphism of $I^{+}$ whose domain and range
are closed under partial functions of $I^{+}$.

\subsubsection*{We show that $\mathrm{tp}(h(\bar{a}),\emptyset,M)=\mathrm{tp}(\bar{a},\emptyset,M)$
holds for all $\bar{a}\subseteq\mathrm{Dom}(h)$:}

\begin{itemize}
\item It is sufficient to show for all $\alpha<\left|T\right|^{+},\, n<\omega$,
$\bar{a}\in\mathbb{I}_{\alpha}\cap\mathrm{Dom}(h)$ without repetitions
($n:=\lg\bar{a}$) the following holds: \[
h\left(\mathrm{tp}(\bar{a},\mathbb{I}_{<\alpha}\cap\mathrm{Dom}(h))\right)=\mathrm{tp}(h(\bar{a}),\mathbb{I}_{<\alpha}\cap\mathrm{Rang}(h))\qquad\boxtimes_{\alpha,n}\]
\\
we prove this by induction on the lexicographic well-order $\left|T\right|^{+}\times\omega$
\item For $\boxtimes_{\alpha,n}$ holds for $\alpha<2$ since $\mathbb{I}_{\alpha}$
is a singleton.
\item Let $\alpha\geq2$, and assume $\boxtimes_{\beta,n}$ for all $n<\omega$
and $\beta<\alpha$.
\item $\boxtimes_{\alpha,1}$ holds, since let $a\in\mathbb{I}_{\alpha}$,
$\varphi(x,\bar{c})$ a formula over $\mathrm{Dom}(h)\cap\mathbb{I}_{<\alpha}$
such that $\varphi[a,\bar{c}]$ holds. Then by the definitions of
the $F$'s above $\psi_{\varphi}[\bar{c},F_{\varphi,0}(a)\ldots F_{\varphi,\lg\bar{y}-1}(a)]$
holds. Since the latter is a formula over $\mathrm{Dom}(h)\cap\mathbb{I}_{<\alpha}$
and by the induction hypothesis it follows that $\psi_{\varphi}[h(\bar{c}),h(F_{\varphi,0}(a))\ldots h(F_{\varphi,\lg\bar{y}-1}(a))]$
holds. Also by the induction hypothesis, $h$ commutes with the functions
of $I^{+}$ over the domain $\mathbb{I}_{<\alpha}\cap\mathrm{Dom}(h)$.
Hence, $\psi_{\varphi}[h(\bar{c}),F_{\varphi,0}(h(a))\ldots F_{\varphi,\lg\bar{y}-1}(h(a))]$
holds. The definition of $F_{\varphi,j}(x)$ implies that $M\models\varphi[h(a),h(\overline{c})]$.
\item For $n>1$ We continue by induction, but first we prove the following:

\begin{claim}
Let $A\subseteq I^{+}$ be closed under functions of $I^{+}$. Then
$A\cap\mathbb{I}_{\alpha}$ is strongly independent over $A\cap\mathbb{I}_{<\alpha}$.
\end{claim}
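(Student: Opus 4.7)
The plan is to reduce uniqueness of nonforking extensions over $A\cap\mathbb{I}_{<\alpha}$ to the uniqueness already guaranteed by strong independence of $\mathbb{I}_\alpha$ over $\mathbb{I}_{<\alpha}$, exploiting the closure of $A$ under the functions $F_i^*$. The cases $\alpha<2$ are trivial since $\mathbb{I}_\alpha$ is then a singleton, so assume $\alpha\geq 2$ and fix $a\in A\cap\mathbb{I}_\alpha$. Because $A$ is closed under the $F_i^*$, the set $B:=\{F_i^*(a):i<|T|\}$ sits in $A\cap\mathbb{I}_{<\alpha}$ and, by the construction of those functions, satisfies: $\mathrm{tp}(a,\mathbb{I}_{<\alpha})$ is nonforking over $B$, and every formula over $\mathbb{I}_{<\alpha}$ which is almost over $B$ is $T$-equivalent to one over $B$.

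Set $C:=A\cap(\mathbb{I}_{\leq\alpha}\backslash\{a\})$ and $p:=\mathrm{tp}(a,C)$. Monotonicity of nonforking gives that $p$ does not fork over $B\subseteq A\cap\mathbb{I}_{<\alpha}$, settling existence of a nonforking extension. For uniqueness, let $q\in\mathbf{S}(C)$ be any nonforking extension of $\mathrm{tp}(a,A\cap\mathbb{I}_{<\alpha})$ over $A\cap\mathbb{I}_{<\alpha}$. Transitivity of nonforking (applied through $\mathrm{tp}(a,A\cap\mathbb{I}_{<\alpha})$, which itself is nonforking over $B$ by monotonicity) yields that $q$ is nonforking over $B$. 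I would then invoke the standard existence of nonforking extensions in a stable theory to extend $q$ to some $q^+\in\mathbf{S}(\mathbb{I}_{\leq\alpha}\backslash\{a\})$ that remains nonforking over $B$.

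The crux is to identify $q^+\upharpoonright\mathbb{I}_{<\alpha}$ with $\mathrm{tp}(a,\mathbb{I}_{<\alpha})$. Both are nonforking extensions of $\mathrm{tp}(a,B)$ to $\mathbb{I}_{<\alpha}$, and Claim~\ref{cla:nonforking_contr_contr} applies to the pair $(B,\mathbb{I}_{<\alpha})$ because the second closure property of $B$ above is exactly its hypothesis. Tracing its proof, the separating formula produced there is a Boolean combination of $E$-classes for some $E\in\mathrm{FE}(B)$, hence almost over $B$; by the hypothesis it is therefore $T$-equivalent to a formula genuinely over $B$, so two nonforking extensions of a common type over $B$ cannot disagree on it and must coincide. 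Consequently $q^+$ extends $\mathrm{tp}(a,\mathbb{I}_{<\alpha})$ and, being nonforking over $\mathbb{I}_{<\alpha}$ by monotonicity from $B$, it equals $\mathrm{tp}(a,\mathbb{I}_{\leq\alpha}\backslash\{a\})$ by the strong independence of $\mathbb{I}_\alpha$ over $\mathbb{I}_{<\alpha}$; restricting to $C$ gives $q=p$. The main obstacle I foresee is this slightly subtle strengthening of Claim~\ref{cla:nonforking_contr_contr} to the \emph{uniqueness} of nonforking extensions of a common type; the remainder is routine bookkeeping with monotonicity, transitivity, and existence of nonforking extensions.
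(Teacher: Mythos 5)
Your proposal is correct and follows essentially the same route as the paper's own proof: the same set $B=\{F_i^*(a):i<|T|\}\subseteq A\cap\mathbb{I}_{<\alpha}$, the same extension of a competing nonforking type over $A\cap(\mathbb{I}_{\le\alpha}\setminus\{a\})$ to one over $\mathbb{I}_{\le\alpha}\setminus\{a\}$, the same use of Claim~\ref{cla:nonforking_contr_contr} (via the almost-over-$B$ separating formula and the reduction of such formulas to formulas over $B$) to identify its restriction to $\mathbb{I}_{<\alpha}$ with $\mathrm{tp}(a,\mathbb{I}_{<\alpha})$, and the same appeal to strong independence of $\mathbb{I}_\alpha$ over $\mathbb{I}_{<\alpha}$ before restricting back. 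The only cosmetic difference is your explicit treatment of $\alpha<2$, which the paper leaves implicit.
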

\begin{proof1}
Let $A_{\alpha}=\mathbb{I}_{\alpha}\cap A$, $a\in A_{\alpha}$, $B:=\left\{ F_{i}^{\ast}(a):i<\left|T\right|\right\} $.
Then,
\begin{enumerate}
\item $B\subseteq A_{<\alpha}$.
\item By the choice of the $F_{i}^{\ast}$'s it holds that $\mathrm{tp}(a,\mathbb{I}_{<\alpha})$
is nonforking over $B$ 
\item By 2 and by transitivity of nonforking, $\mathrm{tp}(a,\mathbb{I}_{\leq\alpha}\backslash\left\{ a\right\} )$
is nonforking over $B$. $\mathrm{tp}(a,\mathbb{I}_{\leq\alpha}\backslash\left\{ a\right\} )$
is a nonforking extension of $\mathrm{tp}(a,\mathbb{I}_{<\alpha})$.
\item For any formula over $\mathbb{I}_{<\alpha}$ which is almost over
$B$ there exists an equivalent formula (in $T$) over $B$ (by the
choice of the $F_{i}^{\ast}$)
\end{enumerate}
The first two properties imply that $\mathrm{tp}(a,A_{\leq\alpha}\backslash\left\{ a\right\} )\subseteq\mathrm{tp}(a,\mathbb{I}_{\leq\alpha}\backslash\left\{ a\right\} )$
is nonforking over $A_{<\alpha}$. 

We turn to proving the uniqueness. Let $q_{0}\in\mathbf{S}(A_{\leq\alpha}\backslash\left\{ a\right\} )$
be a nonforking extension of $\mathrm{tp}(a,A_{<\alpha})$.
\begin{itemize}
\item $q_{0}$ has a nonforking extension $q\in\mathbf{S}(\mathbb{I}_{\leq\alpha}\backslash\left\{ a\right\} )$.
\item $q$ is nonforking over $A_{\leq\alpha}\backslash\left\{ a\right\} $
and by transitivity nonforking over $A_{<\alpha}$ and therefore nonforking
over $A_{<\alpha}\subseteq\mathbb{I_{<\alpha}}$.
\item $q\upharpoonright\mathbb{I}_{<\alpha}=\mathrm{tp}(a,\mathbb{I}_{<\alpha})$
- since otherwise, a formula $\varphi(x)$ over $\mathbb{I}_{<\alpha}$
exists such that $q(x)\vdash\varphi(x),\;{\rm tp}(a,\mathbb{I}_{<\alpha})\vdash\neg\varphi(x)$\.{ }
By 4 above ( as $B$ was chosen ) and claim \ref{cla:nonforking_contr_contr}
$\varphi(x)$ is equivalent to a formula over $B$. Hence, $q\upharpoonright B\neq{\rm tp}(a,B)$
contradicting the choice of $q$.
\item So, $q$ is a nonforking extension of $q\upharpoonright\mathbb{I}_{<\alpha}$,
unique by the strong independence of $\mathbb{I}_{\alpha}$ over $\mathbb{I}_{<\alpha}$,
and therefore equal to ${\rm tp}(a,\mathbb{I}_{\leq\alpha}\backslash\left\{ a\right\} )$.
\item The above arguments imply the required conclusion - $q_{0}=q\upharpoonright(A_{\leq\alpha}\backslash\left\{ a\right\} )=\mathrm{tp}(a,A_{\leq\alpha}\backslash\left\{ a\right\} )$
\end{itemize}
\end{proof1}
\item We continue the main proof, letting $D_{\gamma}:=\mathrm{Dom}(h)\cap\mathbb{I}_{\gamma},\; R_{\gamma}:=\mathrm{Rang}(h)\cap\mathbb{I}_{\gamma}$
( for all $\gamma<\left|T\right|^{+}$, $h"(D_{\gamma})=R_{\gamma}$).

Let $\bar{a}\in^{n}\left(D_{\alpha}\right)$ and $b\in D_{\alpha}\backslash\bar{a}$. 

\begin{itemize}
\item $h\upharpoonright\left(D_{<\alpha}\cup\bar{a}\right)$ is elementary
by the induction hypothesis.
\item $\mathrm{tp}(b,D_{\leq\alpha}\backslash\left\{ b\right\} )$ does
not fork over $D_{<\alpha}$ (by the last claim, and since $\mathrm{Dom}(h)$
us closed under functions), therefore $\mathrm{tp}(b,D_{<\alpha}\cup\bar{a})$
also does not fork over $D_{<\alpha}$.
\end{itemize}
The above, with claim \ref{cla:Forking-is-elementary} imply that
$q:=h(\mathrm{tp}(b,D_{<\alpha}\cup\bar{a}))$ does not fork over
$h(\mathrm{Dom}(h)\cap\mathbb{I}_{<\alpha})=\mathrm{Rang}(h)\cap\mathbb{I}_{<\alpha}$.

\begin{itemize}
\item $\boxtimes_{\beta,1}$ holds for all $\beta<\alpha$, and in particular
$q\in\mathbf{S}(R_{<\alpha}\cup\bar{a})$ is a nonforking extension
of $\mathrm{tp}(h(b),R_{<\alpha})$. Also, $q$ has a nonforking extension
$q^{\prime}\in\mathbf{S}(R_{\leq\alpha}\backslash h(b))$ which does
not fork over $R_{<\alpha}$ by transitivity.
\item On the other hand, since $\mathrm{Rang}(h)$ is closed under functions
and by the last claim, it follows that $R_{\alpha}$ is strongly independent
over $R_{<\alpha}$. hence, $q^{\prime}=\mathrm{tp}(h(b),R_{\leq\alpha}\backslash\left\{ h(b)\right\} )$.
After reduction to $R_{<\alpha}\cup h(\bar{a})$ we get \[
\mathrm{tp}(h(a),R_{<\alpha}\cup h(\bar{a}))=h(\mathrm{tp}(b,D_{<\alpha}\cup\bar{a})\]
 implying the inductive step from$\left(\alpha,n\right)$ to $\left(\alpha,n+1\right)$\[
\mathrm{tp}(h(b\frown\bar{a}),R_{<\alpha})=h(\mathrm{tp}(b\frown\bar{a},D_{<\alpha}))\]

\end{itemize}
\end{itemize}
\end{proof}

\subsection{Representation for $\omega$-stable theories}

\convention{For the remainder of the section $T$ is $\omega$-stable}

\claim{Let $p\in\mathbf{S}(A)$. Then, there exists a finite $B\subseteq A$ 
 such that $p$ is a nonforking extension of $p\upharpoonright B$.}
(See: \cite{Sh:c})

\claim{\label{cla:strong_indep_kappa_a0_stable}For every $p\in\mathbf{S}(A)$
there exists a finite $B\subseteq A$ such that $p$ is the unique nonforking extension of $p\upharpoonright B$ in $\mathbf{S}(A)$.

\claim{\label{cla:special_decomp}Let $M\models T$. $M$ has a strongly independent decomposition $\left\langle \mathbb{I}_{n}:n<\omega\right\rangle $, so that}

\begin{enumerate}
\item $\mathbb{I}_{0}$ is an indiscernible set over $\emptyset$ ( possibly finite ), and
\item For every $a\in\mathbb{I}_{n},\; n<\omega$ there exists a finite $B_{a}\subseteq\mathbb{I}_{<n}$
so that $\mathrm{tp}(a,\mathbb{I}_{\leq n}\backslash\left\{ a\right\} )$
is the unique nonforking extension of $\mathrm{tp}(a,B_{a})$ in $\mathbf{S}(\mathbb{I}_{\leq n}\backslash\left\{ a\right\} )$.
\end{enumerate}
\begin{proof}
The first condition is fulfilled by a singleton, so it is 
possible to find a $\mathbb{I}_{0}\subseteq\left|M\right|$ as above.
For $n>0$, Construct a sequence $\left\langle \mathbb{I}_{n}:n<\omega\right\rangle $ 
such that $\mathbb{I}_{n}\subseteq\left|M\right|$ is 
maximal with respect to the second condition ( possibly empty ) for every $n<\omega$.
Assume towards contradiction that there exists 
$a\in M\backslash\mathbb{I}_{<\omega}$.
By \ref{cla:strong_indep_kappa_a0_stable} it follows that there exists a finite 
$B_{a}\subseteq\mathbb{I}_{<\omega}$ such that $\mathrm{tp}(a,\mathbb{I}_{<\omega})$ 
is the unique nonforking extension of $\mathrm{tp}(a,B_{a})$ in $\mathbf{S}(\mathbb{I}_{<\omega})$.
Clearly, this implies that $\mathbb{I}_n\neq\emptyset$ for all $n<\omega$.
Therefore, there exists $0<n_{\ast}<\omega$ such that $B_{a}\subseteq\mathbb{I}_{<n_{\ast}}$.
In particular it follows that  $\mathrm{tp}(a,\mathbb{I}_{\leq n_{\ast}})$ is 
the unique nonforking extension of $\mathrm{tp}(a,B_{a})$ in $\mathbf{S}(\mathbb{I}_{\leq n_{\ast}})$
( otherwise, by transitivity of nonforking we would have two nonforking extensions in $\mathbf{S}(\mathbb{I}_{<\omega})$). 

The construction above implies that there exists a finite $B_{b}\subseteq\mathbb{I}_{<n_{\ast}}$ such that $\mathrm{tp}(b,\mathbb{I}_{\leq n_{\ast}}\backslash\left\{ b\right\} )$
is the unique nonforking extension of $\mathrm{tp}(b,B_{b})$ in $\mathbf{S}(\mathbb{I}_{\leq n_{\ast}}\backslash\left\{ b\right\} )$.
Claim \ref{cla:str_indep_symmetry} implies
that for every $b\in\mathbb{I}_{n_{\ast}}$,
$\mathrm{tp}(b,\mathbb{I}_{\leq n_{\ast}}\backslash\left\{ b\right\} \cup\left\{ a\right\} )$
is the unique nonforking extension of $\mathrm{tp}(b,B_{b})$ in $\mathbf{S}(\mathbb{I}_{\leq n_{\ast}}\backslash\left\{ b\right\} \cup\left\{ a\right\} )$,
Thus, $\mathbb{I}_{n_{\ast}}\cup\left\{ a\right\} $ fulfills
the second condition, contradicting the maximality of $\mathbb{I}_{n_{\ast}}$.

\end{proof}

\theorem{Let $M\models T$, then $M$ is ${\rm Ex}_{\omega,\omega}^{2}(\mathfrak{k}^{{\rm eq}})$-representable.}

\begin{proof}
Let $\left\langle \mathbb{I}_{n}:n<\omega\right\rangle $ as in \ref{cla:special_decomp}, $I=\left|\mathbb{I}_{0}\right|$.
Since $T$ is $\omega$-stable, $\mathbf{S}^{m}(\emptyset)$
is countable for all $m<\omega$. 
For convenience we replace the enumeration of the functions of $\mathcal{M}(I)$ to 
$\left\{ F_{p}:p\in\mathbf{S}^{<\omega}(\emptyset)\right\} $,
and for every $m+1$-type $F_{p}$ is an $m$-ary function.
Define by induction an increasing series of functions $f_{i}:\mathbb{I}_{\leq i}\to\mathcal{M}(I)$ as follows:
Let $f_{0}$ be a bijective map from $\mathbb{I}_{0}$ onto $I$.
Let $f_{n+1}$ be defined from $f_{n}$ as follows:
\begin{itemize}
\item $f_{n+1}\upharpoonright\mathbb{I}_{\leq n}=f_{n}$
\item For all $a\in\mathbb{I}_{n+1}$, let $\bar{c}_{a}\in^{\ell}\left(\mathbb{I}_{\leq n}\right)$
enumerate $B_{a}$ from Claim \ref{cla:strong_indep_kappa_a0_stable},
$p=\mathrm{tp}(a\!^{\frown}\bar{c}_{a},\emptyset,M)\in\mathbf{S}^{\ell+1}(\emptyset)$.
Define $f_{n+1}(a)=F_{p}(f_{n}(\bar{c}_{a}))$. Now let $f=\bigcup_{n<\omega}f_{n}$. 
Now we will show that $f$ is a $\mathrm{Ex}_{\omega,\omega}^{2}(\mathfrak{k}^{{\rm eq}})$-representation.
Let $h$ be a partial automorphism of $\mathcal{M}(I)$ with domain and range closed under subterms.
Let $\overline{a},\overline{b}\in M$ such that $h(f(\overline{a}))=f(\overline{b})$, and $n$ so that $\overline{a},\overline{b}\in\mathbb{I}_{\leq n}$.
Assume w.l.o.g $m<\omega,\; i<\lg\overline{a}-1$: $a_{\lg\overline{a}-1},b_{\lg\overline{b}-1}\in\mathbb{I}_{\leq m}\to a_{i},b_{i}\in\mathbb{I}_{\leq m}$.
We prove ${\rm tp}_{{\rm qf}}(\overline{a},\emptyset)={\rm tp}_{{\rm qf}}(\overline{b},\emptyset)$
by induction on $\left\langle n,\left|\overline{a}\cap\mathbb{I}_{n}\right|\right\rangle \in\omega\times\omega$.
\end{itemize}
\begin{description}
\item [{case~$n=0$:}] the claim holds since $\mathbb{I}_{0}$ is an indiscernible set.
\item [{case~$n=m+1$:}]~
\begin{description}
\item [{case~$\left|\overline{a}\cap\mathbb{I}_{n}\right|=0$:}] $\overline{a}\subseteq\mathbb{I}_{\leq m}$, hence,
the claim holds by the induction hypothesis.
\item [{case~$\left|\overline{a}\cap\mathbb{I}_{n}\right|>0$:}] Let
$k=\lg\overline{a}-1$. By the definition, $f(a_{k})=F_{p}(\overline{c}_{a_{k}})$
where $\overline{c}_{a_{k}}\subseteq\mathbb{I}_{<n}$.
$h$ commutes with $F_{p}$, implying $f(b_{k})=h(f(a_{k}))=h(F_{p}(f(\overline{c}_{a_{k}})))=F_{p}(h(f(\overline{c}_{a_{k}})))$. 
Therefore, $h(f(\overline{c}_{a_{k}}))=f(\overline{c}_{b_{k}})$. 
Now, since $\left|\overline{c}_{a_{k}}\!^{\frown}\overline{a}\upharpoonright k\cap\mathbb{I}_{n}\right|=\left|\overline{a}\cap\mathbb{I}_{n}\right|-1$, and by the induction hypothesis, 
the map $F:\overline{c}_{a_{k}}\!^{\frown}\overline{a}\upharpoonright k\mapsto\overline{c}_{b_{k}}\!^{\frown}\overline{b}\upharpoonright k$ is elementary.
Consider the type $q=F\left({\rm tp}\left(a_{k},\overline{a}\upharpoonright k\cup\overline{c}_{a_{k}}\right)\right)$.
Note ${\rm tp}(a_{k}\!^{\frown}\overline{c}_{a_{k}})=p={\rm tp}(b_{k}\!^{\frown}\overline{c}_{b_{k}})$,
so $F({\rm tp}(a_{k},\overline{c}_{a_{k}}))={\rm tp}(b_{k},\overline{c}_{b_{k}})$.
Then, $q$ is a nonforking extension of ${\rm tp}(b_{k},\overline{c}_{b_{k}})$.
Moreover, $F$ being elementary and ${\rm tp}(a_{k},\overline{a}\upharpoonright k\cup\overline{c}_{a_{k}})$
is a nonforking extension of ${\rm tp}(a_{k},\overline{c}_{a_{k}})$ imply
that $q$ is a nonforking extension of ${\rm tp}(b_{k},\overline{c}_{b_{k}})$.
%éäéå \L{\[
Now let $q\subseteq q^{\prime},{\rm tp}(b_{k},\overline{b}\upharpoonright k\cup\overline{c}_{b_{k}})\subseteq q^{\prime\prime},\quad q^{\prime},q^{\prime\prime}\in\mathbf{S}(\mathbb{I}_{\leq n}\backslash\left\{ b_{k}\right\} )$ be nonforking extensions.
By monotonicity of nonforking extensions, $q^{\prime},q^{\prime\prime}$
are nonforking extensions of ${\rm tp}(b_{k},\overline{c}_{b_{k}})$.
The definition of $\overline{c}_{b_{k}}$ implies $q^{\prime}=q^{\prime\prime}$.
Thus, $q={\rm tp}(b_{k},\overline{b}\upharpoonright k\cup\overline{c}_{b_{k}})$,
therefore ${\rm tp}(\overline{a}\!^{\frown}\overline{c}_{a_{k}})={\rm tp}(\overline{b},\overline{c}_{b_{k}})$.
${\rm tp}(\overline{a})={\rm tp}(\overline{b})$ follows.
\end{description}
\end{description}
\end{proof}

\section{Appendix - combinatorial claims.}

\theorem{(Fodor) Let $\lambda$ a regular cardinal, and $f:\lambda\to\lambda$ such that $f(\alpha)<\alpha$ for all $0<\alpha<\lambda$. (such $f$ is called regressive) Then there exists an ordinal $\beta<\lambda$ such that the set $\left\{ \alpha<\lambda:f(\alpha)=\beta\right\} $ is
stationary in $\lambda$.}

\corollary{Let $f:\lambda\to\mu$, $\lambda>\mu$ ($\lambda$ regular). There exists an $\alpha<\mu$ such that $f^{-1}(\left\{ \alpha\right\} ) \subseteq\lambda$ is stationary}

\theorem{\label{thm:delta_sys_lemma}($\Delta$-system Lemma) Let $\lambda$
 regular, $\left|W\right|=\lambda$ a set, $\left|S_{t}\right|<\mu\;(t\in W)$
such that $\chi^{<\mu}<\lambda$ for all $\chi<\lambda$. then:} 

\begin{enumerate}
\item There exist $W^{\prime}\subseteq W,\;\left|W^{\prime}\right|=\lambda$
and $S$ such that $s\neq t$ implies $S_{t}\cap S_{s}=S$ for all $s,t\in W^{\prime}$.
\item Moreover, if $\left\langle z_{t}^{\alpha}:\alpha<\alpha(t)\right\rangle $ lists $S_{t}$, also:

\begin{enumerate}
\item \label{delta:a} There exists $\alpha_{0}$ such that $\alpha(t)=\alpha_{0}$ for all $t\in W^{\prime}$.
\item \label{delta:b} There exists $U\subseteq\alpha_{0}$ such that for all $s,t\in W^{\prime}$ implies $S_{t}\upharpoonright U=S_{s}\upharpoonright U$,
$U=\left\{ \alpha<\alpha_{0}:z_{t}^{\alpha}=z_{s}^{\alpha}\right\} $.
\item \label{delta:c} There exists an equivalence $E$ on $\alpha_{0}$ such that $z_{t}^{\alpha}=z_{t}^{\beta}\leftrightarrow\left(\alpha,\beta\right)\in E$,
for all $t\in W^{\prime}$.
\end{enumerate}

\begin{proof}
Proofs for the first part can be found in \cite{J}.

The map $t\to\alpha(t)$ is regressive ($\alpha(t)<\mu<\lambda$),
so by Fodor's theorem there exists $W_{0}\subseteq W$ such that \ref{delta:a} holds. 
By the first part there exists $S\subseteq\left\{ z_{t}^{\alpha}:\alpha<\alpha_{0},t\in W_{0}\right\} ,\; W_{1}\subseteq W_{0}$
such that $S=\overline{z}_{t}\cap\overline{z}_{s}$ for all $t\neq s$. 
Define the map map $W_{1}\ni t\to U_{t}$ where $U_{t}=\left\{ \alpha<\alpha_{0}:z_{t}^{\alpha}\in S\right\} $.
The range has power at most $2^{\left|\alpha_{0}\right|}\leq2^{<\mu}<\lambda$
implying that the map is regressive, and the existence of 
$W_{2}\subseteq W_{1},\; U$ such that $t\in W_{2}\to U_{t}=U$. 
The range of the map $t\to S_{t}\upharpoonright U$
is $\ ^{U}S$ and it has power $\leq\left|\alpha_{0}\right|^{\left|\alpha_{0}\right|}<\lambda$,
By another use of Fodor's theorem there exists $W_{3}\subseteq W_{2}$ such that
$(b)$ holds. The range of the map $t\to E_{t}$ where $E_{t}=\left\{ \left(\alpha,\beta\right):z_{t}^{\alpha}=z_{t}^{\beta},\;\alpha,\beta<\alpha_{0}\right\} $
has power at most $\left|\alpha_{0}\right|^{\left|\alpha_{0}\right|}$
And by another application of Fodor's theorem there are $E$ and $W^{\prime}\subseteq W_{3}$ as required.
\end{proof}
\end{enumerate}

\addcontentsline{toc}{section}{\refname}
%\bibliography{919}

% \bib, bibdiv, biblist are defined by the amsrefs package.
\begin{bibdiv}
\begin{biblist}

\bib{J}{book}{
      author={Jech, Thomas},
       title={{Set theory}},
   publisher={Academic Press, New York},
        date={1978},
}

\bib{Sh:c}{book}{
      author={Shelah, Saharon},
       title={Classification theory and the number of nonisomorphic models},
      series={Studies in Logic and the Foundations of Mathematics},
   publisher={North-Holland Publishing Co., Amsterdam, xxxiv+705 pp},
        date={1990},
      volume={92},
}

\end{biblist}
\end{bibdiv}

\end{document}